\newtheorem{theorem}{Theorem}
\newtheorem{lema}{Lemma}
\newtheorem{proposition}{Proposition}
\theoremstyle{definition}
\newtheorem*{remark}{Remark}
\newcommand{\C}{\mathbb{C}}
\newcommand{\D}{\mathbb{D}}
\newcommand{\T}{\mathbb{T}}
\begin{document}

\begin{frontmatter}



\title{Embedding $H^\infty(\D)$ into $L^\infty(\T)$: a proof without non-tangential limits} 


\author{Mario P. Maletzki\footnote{The author  has received funding from the following source: CIACIF/2021/479. Programa Fons Social Europeu Plus (FSE+) Comunitat Valenciana
2021-2027.}} 

\affiliation{organization={IMAC},
            addressline={Universitat Jaume I }, 
            city={Castellón de la Plana},
            postcode={12071}, 
            country={Spain}}

\begin{abstract}
The purpose of this note is to show in an accessible and self-contained way  the existence of an isometric algebra embedding from  $H^\infty(\D)$ into $L^\infty(\T)$, without appealing to Fatou's classical theorem on non-tangential limits of analytic functions, and relying only on results from complex and functional analysis that are typically covered in a standard undergraduate course. 
\end{abstract}



\begin{keyword}
Isometric embedding \sep Poisson kernel \sep Hardy space


\end{keyword}

\end{frontmatter}





\section{Introduction}

Beyond its intrinsic interest, the existence of an isometric embedding $H^\infty(\D) \hookrightarrow L^\infty(\T)$ has proven to be useful in the study of the classical Hardy space $H^\infty(\D)$ of bounded analytic functions on $\D$, and for its applications (such as the identification of the \emph{Shilov boundary} of $H^\infty(\D)$ or the study interpolating sequences for this space in terms of Poisson  integrals) the reader is referred to \cite{Garnett1981}. For background and notation on functional and complex analysis see \cite{ConwayCFA} and \cite{RudinRCA} respectively, and for basic facts  on the theory of Hardy spaces see for instance \cite{DurenHP}. 

We first recall that the  Poisson kernel in the unit disk is given by $$P_r(\theta):=\frac{1-r^2}{1-2r\cos(\theta)+r^2}=\frac{1-|re^{i\theta}|^2}{|1-re^{i\theta}|^2}, \quad \mbox{ for } 0\leq r<1,$$ and that it is \emph{an approximate identity for $L^1(\T)$}, which means that viewing the Poisson kernel as a net $(P_r)_r\subset L^1(\T)$, we have that $||P_r\ast f-f||_1\rightarrow0$ as $r\rightarrow1$.  The last fact follows from the following well-known properties: 

\begin{enumerate}
    \item[$i)$] $P_r(\sigma)$ is decreasing on $0<\sigma<\pi$,
    \item[$ii)$] $P_r(\sigma)\geq 0$ for every $\sigma$,
    \item[$iii)$] $P_r(\sigma)=P_r(-\sigma)$ for every $0<\sigma<\pi$,
    \item[$iv)$] It holds $$\frac{1}{2\pi}\int_{-\pi}^\pi P_{r}(\sigma)\ d\sigma=1.$$
    \item[$v)$] For every $\delta>0$, $$\lim_{r\rightarrow 1} \sup_{|t|\geq \delta}\{P_r(t)\}=0.$$
\end{enumerate}



Given a   harmonic function $u$ in a disk $D(a,R)$ of the complex plane  that  is also continuous in $\overline{D(a,R)}$, we have that  for each $z=a+\rho e^{i\sigma}\in D(a,R)$ the following equality holds  \begin{equation}\label{Poisson-har}
    u(z)=\frac{1}{2\pi}\int_{-\pi}^\pi u(e^{it})P_{\frac{\rho}{R}}(\sigma-t)\ dt,\end{equation} which is called the \emph{Poisson integral formula for harmonic functions}. To derive this equality, we first prove that as a consequence of both Cauchy's integral theorem and Cauchy's integral formula, we have that (\ref{Poisson-har}) holds for analytic functions, and from there it will be straightforward that \ref{Poisson-har} holds for harmonic functions. 

\begin{lema}
    If $f$ is an analytic function on $D(0,1+\varepsilon)$ for some $\varepsilon>0$, then \begin{equation}
        f(re^{i\sigma})=\frac{1}{2\pi}\int_{-\pi}^\pi f(e^{it})P_{r}(\sigma-t)\ dt \quad \mbox{ for every } \ re^{i\sigma}\in \D
    \end{equation}
\end{lema}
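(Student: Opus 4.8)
The plan is to write the Poisson kernel as a combination of Cauchy-type kernels and then simply read off the result from Cauchy's integral formula and Cauchy's integral theorem. Fix $z=re^{i\sigma}\in\D$ and put $w=e^{it}$, so that by the definition of $P_r$ we have $P_r(\sigma-t)=\dfrac{1-|z|^2}{|1-z\bar w|^2}$. Since $|w|=1$ gives $\bar w=1/w$, the denominator factors as $|1-z\bar w|^2=(1-z\bar w)(1-\bar z w)=\dfrac{(w-z)(1-\bar z w)}{w}$, and therefore
$$P_r(\sigma-t)=\frac{(1-|z|^2)\,w}{(w-z)(1-\bar z w)}=\frac{z}{w-z}+\frac{1}{1-\bar z w},$$
the last equality being a partial-fraction identity (legitimate because $|z|=r<1$). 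As a check, at $z=0$ this reads $P_0\equiv 1$, and at $t=\sigma$ it gives $\frac{r}{1-r}+\frac{1}{1-r}=\frac{1+r}{1-r}=P_r(0)$.

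I would then multiply this identity by $f(e^{it})$, apply $\frac{1}{2\pi}\int_{-\pi}^{\pi}(\cdot)\,dt$, and turn each resulting integral into a contour integral over the circle $|w|=1$ through the substitution $w=e^{it}$, $dt=\frac{dw}{iw}$:
$$\frac{1}{2\pi}\int_{-\pi}^{\pi}f(e^{it})P_r(\sigma-t)\,dt=\frac{1}{2\pi i}\oint_{|w|=1}f(w)\left(\frac{z}{w(w-z)}+\frac{1}{w(1-\bar z w)}\right)dw.$$
With the further partial fractions $\frac{z}{w(w-z)}=\frac{1}{w-z}-\frac{1}{w}$ and $\frac{1}{w(1-\bar z w)}=\frac{1}{w}+\frac{\bar z}{1-\bar z w}$, the right-hand side breaks into standard pieces. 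Here the hypothesis that $f$ is analytic on $D(0,1+\varepsilon)$ is precisely what justifies these contour integrals and allows Cauchy's theorems to be applied with the circle $|w|=1$ lying strictly inside the domain of analyticity: $\frac{1}{2\pi i}\oint\frac{f(w)}{w-z}\,dw=f(z)$ because $|z|<1$, $\frac{1}{2\pi i}\oint\frac{f(w)}{w}\,dw=f(0)$, and $\frac{1}{2\pi i}\oint\frac{f(w)}{1-\bar z w}\,dw=0$ since $w\mapsto\frac{f(w)}{1-\bar z w}$ is analytic on $|w|<1/r$. Adding up, $f(z)-f(0)+f(0)=f(z)$, which is the asserted formula.

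The only genuine obstacle is arranging the kernel identity cleanly; after that the argument is a direct invocation of Cauchy's integral formula and theorem. One alternative would be to first note the mean value property $f(0)=\frac{1}{2\pi}\int_{-\pi}^{\pi}f(e^{it})\,dt$ (immediate from Cauchy's formula) and then transport it to a general $z\in\D$ by precomposing $f$ with the disk automorphism $w\mapsto\frac{z-w}{1-\bar z w}$ and changing variables on $\T$, the Jacobian of which simplifies to $P_r$; I favour the Cauchy-kernel decomposition because it keeps the change of variables down to the single transparent substitution $w=e^{it}$.
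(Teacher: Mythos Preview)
Your argument is correct and is essentially the paper's own: both write the Poisson kernel on $|w|=1$ as a combination of Cauchy-type kernels and then invoke Cauchy's integral formula together with Cauchy's theorem, the only difference being that the paper runs the calculation in the opposite direction (it starts from $\frac{1}{w-z}-\frac{1}{w-1/\bar z}$ and simplifies to $P_r$), and after your cancellation $-\frac1w+\frac1w$ and the identity $\frac{\bar z}{1-\bar z w}=-\frac{1}{w-1/\bar z}$ the two decompositions coincide. One small imprecision worth tightening: $\frac{f(w)}{1-\bar z w}$ is analytic on $D(0,1+\varepsilon)\cap\{|w|<1/r\}$ rather than on all of $\{|w|<1/r\}$, but since this set still contains $\overline{\D}$, Cauchy's theorem applies and the conclusion stands.
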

\begin{proof}
    
    By Cacuhy's integral formula we have $$f(z)=\frac{1}{2\pi i}\int_{\partial\D} \frac{f(w)}{w-z}\ dw\quad \mbox{ for every } \ z\in\D,$$ and since for any $z\in\D$ the function $w\mapsto \frac{f(w)}{w-1/\overline{z}}$ is holomorphic on an open set containing $\overline{\D}$, by Cauchy's integral theorem we have $\int_{\partial\D} \frac{f(w)}{w-1/\overline{z}}\ dw=0$, and thus $$f(z)= \frac{1}{2\pi i}\int_{\partial\D} f(w)\left(\frac{1}{w-z}-\frac{1}{w-1/\overline{z}}\right)\ dw.$$

    Now, if $w\in\partial\D$ then $\frac{1}{w}=\overline{w}$, and computing $$\frac{1}{w-z}-\frac{1}{w-1/\overline{z}}=\frac{(w-1/\overline{z})-(w-z)}{w^2-w(1/\overline{z}+z)+z/\overline{z}}=\frac{z-1/\overline{z}}{w^2-w(1/\overline{z}+z)+z/\overline{z}}=$$ $$=\frac{1}{w}\frac{|z|^2-1}{w\overline{z}-(1+|z|^2)+z/w}=\frac{1}{w}\frac{1-|z|^2}{1-2\Re(z\overline{w})+|z|^2},$$ and we conclude $$f(re^{i\sigma})= \frac{1}{2\pi i}\int_{\partial\D} \frac{f(w)}{w}\frac{1-|z|^2}{1-2\Re(z\overline{w})+|z|^2} \ dw=$$ $$= \frac{1}{2\pi i}\int_{-\pi}^\pi \frac{f(e^{it})}{e^{it}}\frac{1-r^2}{1-2r\cos(\sigma-t)+r^2}ie^{it}\ dw=\frac{1}{2\pi}\int_{-\pi}^\pi f(e^{it})P_{r}(\sigma-t)\ dt.$$
\end{proof}

\begin{remark}
    From this we see that if $f$ is holomorphic on $\D$ and $re^{i\sigma}\in\D$, then for $r<\rho<1$, the function $f_\rho(z):=f(\rho z)$ is holomorphic on $D(0,1/\rho)$ and therefore $$f(re^{i\sigma})=f_\rho((r/\rho)e^{i\sigma})=\frac{1}{2\pi}\int_{-\pi}^\pi f_\rho(e^{it})P_{r/\rho}(\sigma-t)\ dt.$$
\end{remark}
To simplify the notation, for a point $z=re^{i\sigma}\in\D$ we will write $P_z(t):=P_r(\sigma -t)$, and for a function $f$ defined on $\T$, we may consider the function defined on $[-\pi,\pi]$ and write $f(t)$ instead of $f(e^{it})$.

\section{Existence of isometric embedding}

We now proceed to show how the existence of the isometric embedding $H^\infty(\D)\hookrightarrow L^\infty(\T)$ can be deduced from Banach-Alaoglu's theorem, which assures that  any bounded sequence in the dual of a separable normed space has a $w^*$-convergent subsequence.

\begin{theorem}\label{boundary-behaviour}
    Given $f\in H^\infty(\D)$, there is a function $f^*\in L^\infty(\T)$ such that for every $re^{i\sigma}\in\D$ it holds $$f(re^{i\sigma})=\frac{1}{2\pi}\int_{-\pi}^\pi f^*(e^{it})P_r(\sigma-t)\ dt.$$ 
    The mapping $f\mapsto f^*$ defines an isometric algebra homomorphism from $H^\infty(\D)$ into $L^\infty(\T)$.
\end{theorem}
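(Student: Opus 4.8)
The strategy is to obtain $f^*$ as a weak-$*$ limit of dilates of $f$ along the circle. For $0<\rho<1$ let $f_\rho\in C(\T)\subset L^\infty(\T)$ be given by $f_\rho(e^{it}):=f(\rho e^{it})$; these functions satisfy $\|f_\rho\|_\infty\le\|f\|_\infty$. Since $L^\infty(\T)=(L^1(\T))^*$ and $L^1(\T)$ is separable, Banach--Alaoglu (in the sequential form quoted above) gives a sequence $\rho_n\uparrow 1$ and an $f^*\in L^\infty(\T)$ with $\|f^*\|_\infty\le\|f\|_\infty$ such that $f_{\rho_n}\to f^*$ in the weak-$*$ topology, i.e. $\frac1{2\pi}\int_{-\pi}^\pi f_{\rho_n}(e^{it})g(e^{it})\,dt\to\frac1{2\pi}\int_{-\pi}^\pi f^*(e^{it})g(e^{it})\,dt$ for every $g\in L^1(\T)$.

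Next I fix $z=re^{i\sigma}\in\D$ and pass to the limit in the Poisson formula from the Remark. For $\rho_n>r$ the Remark gives
\[
f(re^{i\sigma})=\frac{1}{2\pi}\int_{-\pi}^\pi f_{\rho_n}(e^{it})\,P_{r/\rho_n}(\sigma-t)\,dt.
\]
The integrand splits as a product of $f_{\rho_n}$ (bounded, weak-$*$ convergent) against the kernel $P_{r/\rho_n}(\sigma-\cdot)$, but the kernel itself varies with $n$, so I cannot apply weak-$*$ convergence directly; instead I write the right-hand side as $\frac1{2\pi}\int f_{\rho_n}P_r(\sigma-\cdot)\,dt+\frac1{2\pi}\int f_{\rho_n}\bigl(P_{r/\rho_n}(\sigma-\cdot)-P_r(\sigma-\cdot)\bigr)\,dt$. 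The first term converges to $\frac1{2\pi}\int f^*(e^{it})P_r(\sigma-t)\,dt$ by weak-$*$ convergence (since $P_r(\sigma-\cdot)\in L^1(\T)$). The second term is bounded in absolute value by $\|f\|_\infty\cdot\frac1{2\pi}\|P_{r/\rho_n}-P_r\|_1$, and $r/\rho_n\to r$ together with continuity of $\rho\mapsto P_\rho$ as an $L^1$-valued map (immediate from the explicit formula and dominated convergence, since for $r$ fixed the kernels are uniformly bounded on $\T$) forces this to go to $0$. This yields the integral representation $f(re^{i\sigma})=\frac1{2\pi}\int_{-\pi}^\pi f^*(e^{it})P_r(\sigma-t)\,dt$ for all $re^{i\sigma}\in\D$.

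It remains to check that $f\mapsto f^*$ is a well-defined isometric algebra homomorphism. \emph{Well-defined:} if $g^*$ is another $L^\infty$ function representing $f$ via the Poisson integral, then $\frac1{2\pi}\int(f^*-g^*)P_z\,dt=0$ for every $z\in\D$; since $(P_r)_r$ is an approximate identity for $L^1(\T)$ and hence its span is weak-$*$ dense in... more cleanly: $(f^*-g^*)\ast P_r\to f^*-g^*$ in $L^1$, and the above says $(f^*-g^*)\ast P_r\equiv 0$, so $f^*=g^*$ a.e. In particular $f^*$ does not depend on the chosen sequence $\rho_n$. \emph{Linearity} is then immediate from uniqueness. \emph{Isometry:} one inequality is $\|f^*\|_\infty\le\|f\|_\infty$ from Banach--Alaoglu; for the reverse, $|f(re^{i\sigma})|\le\frac1{2\pi}\int|f^*(e^{it})|P_r(\sigma-t)\,dt\le\|f^*\|_\infty$ using $iv)$, so $\|f\|_\infty=\sup_{\D}|f|\le\|f^*\|_\infty$. \emph{Multiplicativity:} for $f,g\in H^\infty(\D)$ the product $fg\in H^\infty(\D)$, and $(fg)^*$ and $f^*g^*$ both lie in $L^\infty(\T)$; one shows they have the same Poisson integral. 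The cleanest route is to use that along the defining sequence $f_{\rho_n}\to f^*$ and $g_{\rho_n}\to g^*$ weak-$*$ while $(fg)_{\rho_n}=f_{\rho_n}g_{\rho_n}$; passing to a common subsequence we may assume $(fg)_{\rho_n}\to(fg)^*$ weak-$*$ as well, and then for every trigonometric polynomial $p$ one checks $\frac1{2\pi}\int f_{\rho_n}g_{\rho_n}p\,dt\to\frac1{2\pi}\int f^*g^*p\,dt$ — this is the delicate point, since weak-$*$ convergence is not preserved under products in general. Here it works because one factor can be handled by $L^2$-convergence: the power-series/Parseval argument shows $f_{\rho_n}\to f^*$ in $L^2(\T)$ as well (the Taylor coefficients of $f_\rho$ are $\rho^k\widehat f(k)$ with $\sum|\widehat f(k)|^2\rho^{2k}$ bounded), so $f_{\rho_n}g_{\rho_n}\to f^*g^*$ weakly in $L^1$, which suffices. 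Hence $(fg)^*=f^*g^*$ a.e.

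\emph{Expected main obstacle.} The genuinely nontrivial step is the multiplicativity: weak-$*$ limits do not respect products, so one must upgrade the convergence $f_{\rho_n}\to f^*$ to something strong enough (here, $L^2(\T)$-convergence via Parseval on Taylor coefficients) that the product converges in a usable sense. The interchange-of-limits step in the integral representation (controlling $\|P_{r/\rho_n}-P_r\|_1$) is routine by comparison, and the isometry is a short consequence of the normalization $iv)$.
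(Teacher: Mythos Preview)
Your argument is correct and follows essentially the same route as the paper: weak-$*$ compactness of dilates via Banach--Alaoglu, the same splitting of the Poisson integral into a weak-$*$ term and an $\|P_{r/\rho_n}-P_r\|_1$ term, uniqueness via the approximate-identity property (which the paper packages as the lemma that $\mathrm{span}\{P_z\}$ is $L^1$-dense), and the isometry from property $iv)$.

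The one point of contrast is multiplicativity. The paper simply declares ``that $i$ is a linear homomorphism is straightforward'' and moves on; you, by contrast, correctly identify this as the step where naive weak-$*$ reasoning breaks down and supply an actual argument, upgrading $f_{\rho_n}\to f^*$ to $L^2$-convergence via the Taylor/Fourier coefficients and then using boundedness of the other factor. Your argument is sound (after the density-plus-equicontinuity step you hint at, to pass from trigonometric polynomials to all of $L^1$), so in this respect your write-up is more complete than the paper's. Your diagnosis that this is the ``expected main obstacle'' is accurate; the paper simply does not engage with it.
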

For the proof of the theorem we will need the following lemma:
\begin{lema}
    The linear space generated by $\{P_z\}_{z\in\D}$ is dense in $L^1(\T)$.
\end{lema}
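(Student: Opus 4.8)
The plan is to argue by duality. Since $(L^1(\T))^* = L^\infty(\T)$, the Hahn--Banach theorem tells us that the linear span of $\{P_z\}_{z\in\D}$ is dense in $L^1(\T)$ if and only if the only $g\in L^\infty(\T)$ satisfying $\frac{1}{2\pi}\int_{-\pi}^\pi g(e^{it})P_z(t)\,dt=0$ for every $z\in\D$ is $g=0$. So I would fix such a $g$ and aim to show that it vanishes almost everywhere.

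Writing $z=re^{i\sigma}$, so that $P_z(t)=P_r(\sigma-t)$, and using that $P_r$ is even together with the commutativity of convolution on $\T$, the annihilation condition becomes
$$(P_r\ast g)(\sigma)=\frac{1}{2\pi}\int_{-\pi}^\pi P_r(\sigma-t)g(e^{it})\,dt=0$$
for every $\sigma\in[-\pi,\pi]$ and every $0\le r<1$; that is, $P_r\ast g$ is the zero function for each $r$. Since $\T$ has finite measure, $g\in L^\infty(\T)\subset L^1(\T)$, so the approximate-identity property of the Poisson kernel recalled in the introduction applies to $g$, giving $\|P_r\ast g-g\|_1\to 0$ as $r\to1$. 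Together with $P_r\ast g=0$ this forces $\|g\|_1=0$, hence $g=0$ in $L^1(\T)$ and therefore in $L^\infty(\T)$, which is the desired conclusion.

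There is no real obstacle here; the only points requiring a little care are the bookkeeping in rewriting the defining integral of $P_z$ as a convolution (using the symmetry of the kernel and commutativity of $\ast$) and the observation that $L^\infty(\T)\subset L^1(\T)$, so that the approximate-identity convergence is genuinely available for the annihilating functional. Everything else is the standard Hahn--Banach criterion for density of a subspace, combined with the identification of the dual of $L^1(\T)$ with $L^\infty(\T)$.
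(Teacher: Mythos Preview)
Your argument is correct and is essentially identical to the paper's own proof: both apply Hahn--Banach to reduce density to showing that any $g\in L^\infty(\T)$ with $P_r\ast g\equiv 0$ for all $r$ must vanish, and then invoke the approximate-identity property of $(P_r)_r$ on $L^1(\T)$ together with the inclusion $L^\infty(\T)\subset L^1(\T)$.
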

\begin{proof}
    If $\overline{\langle\{P_z\}_{z\in\D}\rangle }\neq L^1(\T)$, by the Hahn-Banach theorem we would find a non-zero function $f\in L^\infty(\T)$ such that $$(f\ast P_r)(\sigma)=\frac{1}{2\pi}\int_{-\pi}^\pi f(e^{it})P_z(t)\ dt=0\quad \mbox{ for every } z=re^{i\sigma}\in \D,$$ and therefore for a fixed $0<r<1$ it is $f\ast P_r\equiv0$. Now, since $L^\infty(\T)\subset L^1(\T)$ and $(P_r)_r$ is an approximate identity for $L^1(\T)$, we have  $||P_r\ast f-f||_1\rightarrow0$ as $r\rightarrow1$, and thus that $f=0$, which is a contradiction. 
\end{proof}
\begin{proof}[Proof of Theorem \ref{boundary-behaviour}]
    Let $f_r(t):=f(rt)\in L^\infty(\T)$ for $0<r<1$. 
    
    The Maximal Modulus Principle implies that $||f_r||\leq||f_s||$ if $0<r\leq s<1$, and thus $||f_r||\leq \lim_{s\rightarrow1}||f_s||=||f||$. Since $L^\infty(\T)\cong(L^1(\T))^*$ and $L^1(\T)$ is separable, we have that $\overline{B_{L^\infty(\T)}(0,||f||)}$ is $w^*$-sequentially compact, and therefore for any sequence $(f_{r_n})_n$ where $(r_n)\subset(0,1)$ with $r_n\rightarrow 1$, there is a subsequence $(f_{r_{n_k}})_k$ and a function $f^*\in \overline{B_{L^\infty(\T)}(0,||f||)}$ such that $f_{r_{n_k}}\xrightarrow{w^*} f^*$.

    For a fixed $re^{i\sigma}\in\D$, considering any $r<r_{n_k}<1$, we have that $$ f(re^{i\sigma})- \frac{1}{2\pi}\int_{-\pi}^\pi f^*(e^{it})P_r(\sigma-t)\ dt=$$ $$\frac{1}{2\pi}\left(\int_{-\pi}^\pi f_{r_{n_k}}(e^{it})P_{\frac{r}{r_{n_k}}}(\sigma-t)\ dt- \int_{-\pi}^\pi f^*(e^{it})P_r(\sigma-t)\ dt\right)=$$ $$\frac{1}{2\pi}\int_{-\pi}^\pi f_{r_{n_k}}(e^{it})(P_{\frac{r}{r_{n_k}}}-P_r)(\sigma-t)\ dt- \frac{1}{2\pi}\int_{-\pi}^\pi ( f_{r_{n_k}}-f^*)(e^{it})P_r(\sigma-t)\ dt$$ and thus $$\left| f(re^{i\sigma})- \frac{1}{2\pi}\int_{-\pi}^\pi f^*(e^{it})P_r(\sigma-t)\ dt\right|\leq$$ $$\left|\frac{1}{2\pi}\int_{-\pi}^\pi f_{r_{n_k}}(e^{it})(P_{\frac{r}{r_{n_k}}}-P_r)(\sigma-t) dt\right|+ \left|\frac{1}{2\pi}\int_{-\pi}^\pi ( f_{r_{n_k}}-f^*)(e^{it})P_r(\sigma-t) dt\right| $$ $$\leq||f||_\infty ||P_{\frac{r}{r_{n_k}}}-P_r)||_1+ \left|\frac{1}{2\pi}\int_{-\pi}^\pi ( f_{r_{n_k}}-f^*)(e^{it})P_r(\sigma-t)\ dt\right|,  $$ and taking limits as $k\rightarrow \infty$, since $r_{n_k}\rightarrow 1$ and $f_{r_{n_k}}\xrightarrow{w^*} f^*$, we conclude that $$f(re^{i\sigma})= \frac{1}{2\pi}\int_{-\pi}^\pi f^*(e^{it})P_r(\sigma-t)\ dt.$$

    To see that $f\mapsto f^*$ defines an isometric algebra homomorphism from $H^\infty(\D)$ into $L^\infty(\T)$, we first check that $i:H^\infty(\D)\rightarrow L^\infty(\T)$ with $i(f):=f^*$ is well-defined, that is, it does not depend on the sequence $(r_n)_n$. Considering another sequence $(r_n')_n\subset(0,1)$ with $r_n'\rightarrow1$, this would lead to another function $g^*\in L^\infty(\T)$ such that $$f(re^{i\sigma})= \frac{1}{2\pi}\int_{-\pi}^\pi g^*(e^{it})P_r(\sigma-t)\ dt\quad \mbox{ for every } re^{i\sigma}\in\D,$$ but then we would have for every $z=re^{i\sigma}\in\D$ that $$(f^*-g^*)(P_z)=\frac{1}{2\pi}\int_{-\pi}^\pi (f^*-g^*)(e^{it})P_r(\sigma-t)\ dt=f(z)-f(z)=0,$$ and by the previous lemma we would conclude that $f^*=g^*$. That $i$ is a linear homomorphism is straightforward, and since $||f^*|\leq||f||$, it is only left to see that $||f||\leq||f^*||$, but this is clear because $$|f(z)|\leq\frac{1}{2\pi}\int_{-\pi}^\pi |f^*(e^{it})|P_r(\sigma-t)\ dt\leq \frac{||f^*||}{2\pi}\int_{-\pi}^\pi P_r(\sigma-t)\ dt=||f^*||,$$ and thus $||f||=\sup_{z\in\D}|f(z)|\leq||f^*||$
\end{proof}

\section{Extending the embedding to harmonic function}

Since in the proof of Theorem \ref{boundary-behaviour} we only used the fact that $f$ is holomorphic on $\D$ when we claimed that $$f(re^{i\sigma})= \frac{1}{2\pi}\int_{-\pi}^\pi f_{r_{n_k}}(e^{it})P_{r/{r_{n_k}}}(\sigma-t)\ dt,$$ and the same equality holds for complex-valued harmonic function $\D$, it is straightforward that  the isometric algebra embedding can actually be extended to $h^\infty_{\C}(\D)$, the Hardy space of bounded complex-valued harmonic functions on $\D$. 

Moreover, the isometric embedding $h_\C^\infty(\D)\hookrightarrow L^\infty(\T)$ turns out to be surjective: 

\begin{theorem}\label{isometria h(D)}
    If $f$ is a bounded complex-valued harmonic function on $\D$, then there is a function  $f^*\in L^\infty(\T)$ such that for every $re^{i\sigma}\in\D$ it holds $$f(re^{i\sigma})=\frac{1}{2\pi}\int_{-\pi}^\pi f^*(e^{it})P_r(\sigma-t)\ dt.$$ 
    The mapping $f\mapsto f^*$ defines an isometric algebra isomorphism from $h^\infty(\D)$ onto $L^\infty(\T)$.
\end{theorem}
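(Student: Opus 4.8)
The plan is to build on Theorem~\ref{boundary-behaviour} (and the remark immediately following it, which observes that the Poisson representation used in that proof only requires harmonicity, not holomorphy) so that the only new work is \emph{surjectivity} of the map $f\mapsto f^*$ from $h^\infty_\C(\D)$ into $L^\infty(\T)$; the isometric-algebra-homomorphism part is verbatim the argument already given, with ``holomorphic'' replaced by ``harmonic'' throughout. So first I would state explicitly that for $f$ bounded harmonic on $\D$ the same $w^*$-compactness argument produces $f^*\in L^\infty(\T)$ with $f = P[f^*]$ (the Poisson integral), that $\|f\|_\infty = \|f^*\|_\infty$, and that $f\mapsto f^*$ is linear and multiplicative on the subalgebra where products stay harmonic — though strictly the cleanest statement is just ``isometric linear isomorphism''; I would keep the algebra language only if the paper intends $h^\infty$ with pointwise operations, since the product of harmonic functions need not be harmonic, and I would flag that the ``algebra'' claim should be read accordingly.

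For surjectivity, given an arbitrary $g\in L^\infty(\T)$, I would define $u(z) := \frac{1}{2\pi}\int_{-\pi}^\pi g(e^{it}) P_r(\sigma-t)\,dt$ for $z = re^{i\sigma}\in\D$, i.e.\ $u = P[g]$. The key steps are: (1) $u$ is harmonic on $\D$ — this follows because $P_r(\sigma-t) = \Re\!\big(\frac{e^{it}+z}{e^{it}-z}\big)$, so $u$ is the real part of the holomorphic function $z\mapsto \frac{1}{2\pi}\int_{-\pi}^\pi g(e^{it})\frac{e^{it}+z}{e^{it}-z}\,dt$ (differentiation under the integral sign is justified since the integrand is bounded and holomorphic in $z$ on compact subsets of $\D$); for complex-valued $g$ one writes $g = g_1 + ig_2$ and takes $u = P[g_1] + iP[g_2]$, which is complex-harmonic. (2) $u$ is bounded: $|u(z)| \le \|g\|_\infty \cdot \frac{1}{2\pi}\int_{-\pi}^\pi P_r(\sigma-t)\,dt = \|g\|_\infty$ by property $(iv)$. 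So $u\in h^\infty_\C(\D)$. (3) $u^* = g$: the function $u^*$ produced by the embedding satisfies $P[u^*] = u = P[g]$ on all of $\D$, hence $(u^* - g)(P_z) = 0$ for every $z\in\D$; by the density lemma (the linear span of $\{P_z\}_{z\in\D}$ is dense in $L^1(\T)$) this forces $u^* = g$ in $L^\infty(\T) = (L^1(\T))^*$. This exhibits $g$ as the image of $u$, giving surjectivity, and combined with the isometry (hence injectivity) from Theorem~\ref{boundary-behaviour} we get the claimed isomorphism.

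The main obstacle — really the only non-routine point — is step (1), establishing that the Poisson integral of a merely $L^\infty$ (not continuous) boundary function is harmonic: one cannot quote the Poisson integral formula for harmonic functions in reverse, so I would prove harmonicity directly via the Schwarz-kernel representation $\frac{e^{it}+z}{e^{it}-z}$ and a dominated-convergence / Morera or power-series argument to show the resulting function of $z$ is holomorphic on $\D$. Everything else (boundedness, matching of boundary data through the density lemma, transport of the norm identity) is immediate from results already in the excerpt. I would close by remarking that this $u = P[g]$ is automatically the \emph{unique} bounded harmonic extension with $u^* = g$, again by the density lemma, so the inverse of the embedding is exactly $g\mapsto P[g]$.
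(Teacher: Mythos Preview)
Your proposal is correct and follows essentially the same route as the paper: the injective isometric part is carried over verbatim from Theorem~\ref{boundary-behaviour} via the remark that only the Poisson representation (hence harmonicity) was used, and surjectivity is obtained by showing that the Poisson integral $P[g]$ of any $g\in L^\infty(\T)$ lies in $h^\infty_\C(\D)$, with harmonicity proved through the holomorphic Schwarz kernel $\frac{e^{it}+z}{e^{it}-z}$ and a Morera-type argument --- this is exactly the content of the paper's Proposition~\ref{Poisson-int}. You are in fact more explicit than the paper on two points it leaves implicit: the passage from real-valued to complex-valued $g$ by splitting into real and imaginary parts, and the verification that $(P[g])^*=g$ via the density lemma; your side remark that the ``algebra'' language is questionable for $h^\infty_\C(\D)$ (since products of harmonic functions need not be harmonic) is also a fair observation that the paper does not address.
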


To prove this, we first recall that for an integrable function $f\in L^1(\T)$ its \emph{Poisson integral} is defined by $$F(z):=\frac{1}{2\pi}\int_{-\pi}^\pi f(e^{it})P_z(t)\ dt,$$ and that it satisfies the following: 


\begin{proposition}\label{Poisson-int}
    For any real-valued function $f\in L^1(\T)$, its Poisson integral $F(z)$ is an harmonic function on $\D$.
\end{proposition}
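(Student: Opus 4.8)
The plan is to write $F$ as the real part of a holomorphic function and then apply Morera's theorem. By a computation entirely analogous to the one in the first lemma, for $z=re^{i\sigma}\in\D$ and $w=e^{it}$ one has
$$P_z(t)=P_r(\sigma-t)=\frac{1-|z|^2}{|w-z|^2}=\Re\left(\frac{w+z}{w-z}\right),$$
the last identity following from $\frac{w+z}{w-z}=\frac{(w+z)\,\overline{(w-z)}}{|w-z|^2}$ and $|w|=1$. Since $f$ is real-valued, this gives $F=\Re G$ on $\D$, where
$$G(z):=\frac{1}{2\pi}\int_{-\pi}^\pi f(e^{it})\,\frac{e^{it}+z}{e^{it}-z}\ dt.$$
As the real part of a holomorphic function is harmonic, it suffices to prove that $G$ is holomorphic on $\D$.

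First I would check that $G$ is well defined and continuous on $\D$. If $K\subset\D$ is compact, the denominator $|e^{it}-z|$ is bounded below by the positive constant $1-\max_{\zeta\in K}|\zeta|$, uniformly in $z\in K$ and in $t$, so the integrand is dominated by a fixed constant times $|f|\in L^1(\T)$; continuity of $G$ then follows from the dominated convergence theorem together with the continuity of $z\mapsto\frac{e^{it}+z}{e^{it}-z}$. For holomorphy I would invoke Morera's theorem: fix a closed triangle $\Delta\subset\D$ and choose $\rho$ with $\max_{z\in\Delta}|z|<\rho<1$. For each fixed $t$ the function $z\mapsto\frac{e^{it}+z}{e^{it}-z}$ is holomorphic on the disk $D(0,\rho)\supset\Delta$, since its only singularity $e^{it}$ has modulus $1>\rho$, so Cauchy's integral theorem gives $\int_{\partial\Delta}\frac{e^{it}+z}{e^{it}-z}\ dz=0$. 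Since the integrand $(z,t)\mapsto f(e^{it})\frac{e^{it}+z}{e^{it}-z}$ is bounded on $\partial\Delta\times[-\pi,\pi]$, Fubini's theorem gives
$$\int_{\partial\Delta}G(z)\ dz=\frac{1}{2\pi}\int_{-\pi}^\pi f(e^{it})\left(\int_{\partial\Delta}\frac{e^{it}+z}{e^{it}-z}\ dz\right)dt=0,$$
and Morera's theorem then shows $G$ is holomorphic on $\D$; hence $F=\Re G$ is harmonic on $\D$.

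The only point needing care is the interchange of the two integrations, i.e.\ the use of Fubini's theorem; this is harmless precisely because, with $\Delta$ pushed strictly inside $\D$, the kernel $\frac{e^{it}+z}{e^{it}-z}$ is bounded on $\partial\Delta\times[-\pi,\pi]$ while $f\in L^1(\T)$, so there is no integrability obstruction. As an alternative one could avoid the auxiliary function $G$ and differentiate $F$ twice under the integral sign, using that $z\mapsto P_z(t)$ is harmonic in $\D$ for each fixed $t$ and that the second-order partial derivatives of the Poisson kernel are bounded uniformly in $t$ on compact subsets of $\D$; the justification of differentiating under the integral is of the same routine nature.
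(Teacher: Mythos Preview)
Your argument is essentially identical to the paper's: both express $F$ as the real part of the Herglotz-type integral $G(z)=\frac{1}{2\pi}\int f(e^{it})\,\frac{e^{it}+z}{e^{it}-z}\,dt$ (the paper writes the kernel as $\frac{1+e^{-it}z}{1-e^{-it}z}$, which is the same function) and then verify Morera's criterion via Fubini and Cauchy's theorem. One small slip worth fixing: you assert that the full integrand $(z,t)\mapsto f(e^{it})\frac{e^{it}+z}{e^{it}-z}$ is \emph{bounded} on $\partial\Delta\times[-\pi,\pi]$, which need not hold since $f$ is only in $L^1$; the correct justification---which you yourself give in the following paragraph---is that the kernel is bounded there while $f\in L^1(\T)$, so the product lies in $L^1(\partial\Delta\times[-\pi,\pi])$.
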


\begin{proof}
    Consider the function $g:[-\pi,\pi]\times\D\rightarrow\C$ defined by $g(t,z):=\frac{1+e^{-it}z}{1-e^{-it}z}$, which for each fixed $t$ is holomorphic on $\D$, and define $$G(z):=\frac{1}{2\pi}\int_{-\pi}^\pi g(t,z)f(t)\ dt.$$

    Since $f$ is real-valued, we have that $\Re(G)(z)=\frac{1}{2\pi}\int_{-\pi}^\pi \Re(g(t,z))f(t)\ dt$, and because $$g(t,re^{i\sigma})=\frac{1+re^{-i(\sigma-t)}}{1-re^{-i(\sigma-t)}}\frac{1-re^{-i(t-\sigma)}}{1-re^{-i(t-\sigma)}}=$$ $$=\frac{1-re^{-i(t-\sigma)}+re^{-i(\sigma-t)}-r^2}{1-re^{-i(t-\sigma)}-re^{-i(\sigma-t)}-r^2}=\frac{(1-r^2)+i2\sin(\sigma-t)}{1-2r\cos(\sigma-t)+r^2},$$ we have that $\Re(G)(z)=\frac{1}{2\pi}\int_{-\pi}^\pi P_z(t)f(t)\ dt$, so it is enough to prove that $G(z)$ is holomorphic on $\D$ to show that the Poisson integral of $f$ is harmonic on $\D$.

    Now, considering a closed triangle $\Delta\subset\D$, the function $g(t,z)$ is bounded on $\partial\D\times\partial\Delta$ and $f(t)$ is integrable in $\partial\D$, so the function $g(t,z)f(t)\in L^1(\partial\D\times\partial\Delta)$ and thus by Fubini's theorem $$\int_{\partial\Delta}G(z)\ dz=\int_{\partial\Delta}\left( \frac{1}{2\pi}\int_{-\pi}^\pi g(t,z)f(t)\ dt\right)\ dz=$$$$=\frac{1}{2\pi}\int_{-\pi}^\pi f(t)\left(  \int_{\partial\Delta}g(t,z)\ dz\right)\ dt,$$ but since for a fixed $t $ $g(t,z):\D\rightarrow\C$ is holomorphic, we have that $\int_{\partial\Delta}g(t,z)\ dz=0$, and therefore $\int_{\partial\Delta}G(z)\ dz=0$ for every triangle $\Delta\subset\D$, and an application of Morera's theorem is enough to claim that $G(z)$ is holomorphic on $\D$.
\end{proof}

\begin{proof}[Proof of Theorem \ref{isometria h(D)}]
    From the comments at the beginning of this section it is clear that $f\mapsto f^*$ is an isometric algebra homomorphism from $h^\infty(\D)$ into $L^\infty(\T)$, and by the previous proposition we also have that it is onto. 
\end{proof}

\begin{remark}
    Observe that all this results could be adapted to functions on $\D^n$ with the corresponding Poisson kernels on the polydisks. 
\end{remark}

\end{document}